\documentclass[11pt]{amsart}
\usepackage{lmodern}  
\usepackage{anyfontsize}
\usepackage{geometry}
\usepackage{fullpage}
\usepackage{parskip}
\usepackage[pdftex]{lscape}
\usepackage{euscript}
\usepackage[all]{xy}
\usepackage{graphicx, xcolor}
\usepackage{comment}
\usepackage[alphabetic,msc-links]{amsrefs}

\usepackage{mathrsfs, hyperref}
\usepackage{enumerate,float,makecell, multicol,longtable}

\newtheorem{thm}{Theorem}
 
\newtheorem{lem}[thm]{Lemma} 
\newtheorem{prop}[thm]{Proposition}

\theoremstyle{definition}

\newcommand{\op}{\emph{op.\ cit.}\ }

\newcommand{\PP}{\mathbb{P}}

\newcommand{\ra}{\rightarrow}
\newcommand{\bC}{\mathbb{C}}
\newcommand{\fB}{\mathfrak{B}}

\newcommand{\GL}{\mathrm{GL}}
\newcommand{\ft}{\mathfrak{t}}

\newcommand{\cC}{\mathcal{C}}
\newcommand{\cK}{\mathcal{K}}
\newcommand{\cD}{\mathcal{D}}
\newcommand{\cM}{\mathcal{M}}

\newcommand{\Irr}{\mathrm{Irr}}

\newcommand{\Conn}{\mathrm{Conn}}
\newcommand{\bP}{\mathbb{P}}
\newcommand{\Fq}{\mathbb{F}_q}

\newcommand{\cF}{\mathcal{F}}
\newcommand{\fg}{\mathfrak{g}}

\begin{document}
\title{Counting points on braid varieties\\ and the Deligne--Simpson problem} 
\author{Masoud Kamgarpour}
\author{Bailey Whitbread}

\address{School of Mathematics and Physics, The University of Queensland, Australia}
\email{\href{mailto:masoud@uq.edu.au}{masoud@uq.edu.au}}

\address{School of Mathematics and Statistics, The University of Sydney, Australia}
\email{\href{mailto:b.whitbread@usyd.maths.edu.au}{b.whitbread@usyd.maths.edu.au}}

\begin{abstract} 
We solve the isoclinic Deligne–Simpson problem for exceptional groups, completing a program initiated by Sage et al.\ and Jakob--Yun. As a by-product, we obtain new examples of physically rigid irregular $G$-connections on $\bP^1$. Our approach uses the Riemann--Hilbert correspondence to reduce the problem to determining the non-emptiness of certain braid varieties associated to periodic braids. We show that this can be achieved by counting points over finite fields. Our approach is inspired by Lusztig's construction of a map from conjugacy classes in the Weyl group to unipotent  classes. 
\end{abstract} 

\maketitle 
\section{Introduction}  

Let $G$ be a complex reductive group. The Deligne--Simpson problem concerns the existence of $G$-connections on curves with prescribed singularities at finitely many points. For $G=\GL_n$, substantial progress has been made using non-abelian Hodge theory \cite{Simpson}, $\ell$-adic cohomology \cite{Katz}, and quiver varieties \cite{CB, Hiroe}. In contrast, much less is known for groups of general type.  

In this paper, we solve the Deligne--Simpson problem for a class of $G$-connections on $\bP^1-\{0,\infty\}$ known as \emph{isoclinic}.\footnote{Our terminology is consistent with \cite{JY} but different from \cite{SageReview}. In particular, what we call isoclinic is called toral in \cite{SageReview}. Following \cite{HJ}, we reserve the term toral for a more general notion; see \S \ref{s:Toral}.} Such connections have an irregular singularity at $0$ with a regular semisimple Levelt--Turrittin leading term, and a regular singularity at $\infty$. 
Important examples include generalised Frenkel--Gross connections (also known as generalised Kloosterman connections) \cite{FrenkelGross, HeinlothNgoYun}, Yun's $\theta$-connections \cite{YunEpipelagic, ChenRigid}, and Coxeter connections \cite{KS-rigid, KS-galois}.   

Suppose $G$ is a simple group and let $W$ denote its Weyl group. Then it is known that the slope of an isoclinic connection is of the form $\nu=d/m$, where $m$ is a regular number for $W$ (in the sense of Springer \cite{Springer}) and $d$ is a positive integer coprime to $m$, cf. 
 \cite[Lemma 2.2]{JY}. Our main result is as follows: 

\begin{thm} \label{t:main} There exists a (necessarily unique) unipotent conjugacy class $\mathcal{C}_\nu\subset G$ such that there is an isoclinic $G$-connection of slope $\nu$ and unipotent monodromy $\cC$ if and only if $\cC \ge \mathcal{C}_\nu$.\footnote{Here $\geq$ denotes the usual partial order on unipotent conjugacy classes: $\cC\geq \cC'$ if and only if $\overline{\cC}\supseteq \cC'$.} When $\nu>1$, we have $\mathcal{C}_\nu=\{1\}$. When $\nu<1$ and $G$ is of exceptional type, the classes $\mathcal{C}_\nu$ are listed in \S \ref{ss:minimal}.
\end{thm}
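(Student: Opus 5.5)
The plan is to follow the route sketched in the abstract: pass through the Riemann--Hilbert correspondence, reduce the existence problem to the non-emptiness of certain braid varieties, and decide non-emptiness by counting points over finite fields.

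\textbf{Step 1: Riemann--Hilbert.} By the irregular Riemann--Hilbert correspondence (Stokes data), an isoclinic connection of slope $\nu=d/m$ with monodromy class $\cC$ at $\infty$ corresponds to a point of a character-type variety. This variety parametrises tuples consisting of:
\begin{itemize}
\item a formal monodromy $n\in N(T)$ lifting a regular element $w\in W$ of order $m$;
\item Stokes matrices in the unipotent radicals of the Stokes groups, attached to the $d$ successive Stokes directions;
\item an element $u\in\cC$;
\end{itemize}
subject to the condition that their ordered product is $1$. After conjugating by the torus, this space becomes a braid variety attached to the periodic braid $\beta=\mathbf{c}_w^{\,d}$, where $\mathbf{c}_w$ is the positive lift of $w$ with $\mathbf{c}_w^m=\Delta^2$ (Broué--Michel), twisted by the condition that the product lies in $\cC$. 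Thus existence is equivalent to
\[X_\beta(\cC)=\{(B_0,\dots,B_\ell)\ :\ \text{relative positions given by }\beta,\ \text{closing element in } \cC\}\neq\emptyset.\]

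\textbf{Step 2: point counting.} Non-emptiness follows if the variety has a spreading out over a finitely generated ring whose number of $\Fq$-points is a nonzero polynomial in $q$. I would compute $|X_\beta(\cC)(\Fq)|$ as a character sum: the count of elements of $B w_1 B\cdots B w_\ell B$ meeting $\cC(\Fq)$ is expressed through Iwahori--Hecke algebra traces,
\[\sum_{\chi}\chi_q(T_\beta)\,\frac{\chi(\cC)}{\cdots},\]
using Lusztig's theory of unipotent characters and Green functions. For periodic braids, $\chi_q(T_{\mathbf{c}_w}^d)$ equals $\chi(1)\,q^{\text{explicit}}$ times a root of unity (Broué--Michel), so the sum becomes explicit in terms of fake degrees and Green functions $Q^{G}_{T_w}(u)$. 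This computation is exactly Lusztig's map $\Phi$ from conjugacy classes in $W$ to unipotent classes; via it, I would identify $\cC_\nu$ as the minimal class on which the resulting polynomial does not vanish. Monotonicity is then argued geometrically: if $\cC\ge\cC_\nu$, the closure condition is open, so the variety for $\cC$ is non-empty whenever that for $\cC_\nu$ is. Uniqueness of $\cC_\nu$ holds because the classes for which existence holds form an upward-closed set with a unique minimum; the verification gives a single minimal class.

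\textbf{Step 3: the two regimes.} For $\nu>1$ we have $d>m$, so $\beta$ contains a full twist $\Delta^2$. The braid variety of $\Delta^2\cdot\beta'$ surjects onto the double Bruhat-type cell containing $1$, so $u=1$ is attainable and $\cC_\nu=\{1\}$. Alternatively, $\mathbf{c}_w^d\ge\Delta^2$ in the positive monoid, and $B\Delta^2 B$-type products contain $1$. For $\nu<1$ and exceptional $G$, the computation is case by case over the regular numbers $m$ and the $d<m$ coprime to $m$. For each case I would evaluate the polynomial on each unipotent class using CHEVIE Green functions and record the minimal class $\cC_\nu$ at which the count is nonzero.

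The main obstacle is Step 3 in the exceptional case with $\nu<1$. It requires a reliable explicit point count, including control of signs and cancellation in the character sum, and a proof that vanishing of the count for small classes genuinely means emptiness rather than mere cancellation. I would handle this by showing that the variety is pure, or has a cell decomposition (Deodhar), so that a zero count forces emptiness.
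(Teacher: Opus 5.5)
Your overall route is the paper's. Riemann--Hilbert reduces the problem to non-emptiness of $\cM((\widetilde{w_m})^d,\cC)$, equivalently $(Bw_mB)^d\cap\cC\neq\emptyset$. The case $\nu>1$ is handled via the full twist. For $\nu<1$ one evaluates $\frac{1}{|G^F|}\sum_{g\in\cC^F}\sum_E \mathrm{tr}((T_w)^d,E_q)\,\mathrm{tr}(g,\rho_E)$ in CHEVIE.

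The genuine gap is your monotonicity step. That $\cC$ is open in $\overline{\cC}$ only shows that the variety for $\cC$ is an open subset of the variety for $\overline{\cC}$. That open subset can be empty even though the larger variety is non-empty because of points lying over $\cC_\nu$. To push non-emptiness upward you would need the locus over $\cC_\nu$ to lie in the closure of the locus over $\cC$. That requires some flatness or dimension control of the map $(g,F_\bullet)\mapsto g$, which you do not have. It is also not a general feature of positive braids: the interval property (``niceness'') is exactly the non-trivial content of the theorem, and the paper exhibits a positive braid in type $F_4$ for which it fails. The paper does not argue this geometrically at all. It computes the count for \emph{every} unipotent class and checks directly that the non-vanishing locus is precisely $\{\cC\ge\cC_\nu\}$. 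You should do the same; your Step 3 already evaluates every class, so read off the whole non-vanishing set rather than only its minimum. The same issue affects $\nu>1$: knowing that $1$ is attainable does not give every class. Instead use $(Bw_0B)(Bw_0B)=G$, which holds because the product of two dense open subsets of a connected group is the whole group. Hence $B(\beta)=G$ whenever $\beta$ contains $\Pi_W$.

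Several further points need correcting.
\begin{itemize}
\item Your identification of the computation with Lusztig's map is wrong for $d>1$. The set $(Bw_mB)^d$ is not a Bruhat cell of a minimal-length element: even when $d<m/2$ and it equals $Bw_m^dB$, one has $\ell(w_m^d)=d\,\ell(w_m)$, which is not minimal in the class. The answers genuinely differ: for $G_2$ and $\nu=2/3$ the paper finds $\cC_\nu=A_1$, whereas $w^2$ is again regular of order $3$ and Lusztig's map sends its class to $G_2(a_1)$.
\item The Broué--Michel input is $\mathrm{tr}((T_w)^d,E_q)=q^{\nu\,\mathbf{c}(E_q)}\,\mathrm{tr}(w^d,E)$. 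It is not $\chi(1)$ times a power of $q$ and a root of unity.
\item Your ``main obstacle'' is not one. The character sum equals $|X^F|/|G^F|$, where $X$ is the variety of tuples $(g,F_1,\dots,F_{d+1})$, so it is non-negative and vanishes if and only if $X^F=\emptyset$. Vanishing for all powers of $q$ means there are no $\overline{\Fq}$-points, and spreading out transfers this to $\bC$. No purity or Deodhar-type decomposition is needed.
\item A case-by-case computation only covers exceptional groups. For classical $G$ with $\nu<1$, existence of $\cC_\nu$ cannot be settled by finite computation, and the paper relies on Jakob--Yun there.
\end{itemize}
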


When $G=\GL_n$ and $m=n$, this result was established by Sage and collaborators \cite{SageEtAl, LivesaySageNguyen} using intricate linear algebra. Jakob and Yun \cite{JY} proved the theorem for classical groups via non-abelian Hodge theory. They also treated some exceptional cases; however, groups of type $E$ remained unresolved. Our approach for exceptional groups is different and uses instead the Riemann--Hilbert correspondence. 

The version of the Riemann--Hilbert correspondence we need was established recently by Hohl and Jakob \cite{HJ}, building on prior work of Boalch \cite{BoalchTopology}. The key input is that irregular connections give rise to positive braids \cite{BBAMY}. The moduli space of irregular connections is then identified with the corresponding braid variety. 
These varieties first appeared in the work of Deligne on the $K(\pi,1)$ Conjecture \cite{Deligne}, and have recently attracted considerable interest due to their central role in symplectic topology, knot invariants, and cluster algebras \cite{STZ, Trinh, GorskyKivinenSimental, IanEtAl, Mellit, GLSB}. To take into account the monodromies of connections, we need to work with variants of braid varieties introduced by Trinh \cite{Trinh}. 

It turns out that the class of braids arising from isoclinic connections coincide with \emph{periodic} braids, i.e., those with a non-trivial central power. 
More precisely, we show that the braid associated to an isoclinic connection of slope $\nu=d/m$ is $(\widetilde{w_m})^d$, where $w_m$ is an \emph{$m$-Springer element} (in the sense of \cite{BM}), and $\widetilde{w_m}$ denotes its lift to the positive braid monoid. Deep results of Brou\'{e}--Michel and Bessis \cite{BM, BessisKPi1} imply that, up to conjugacy in the braid group, $\widetilde{w_m}$ is the unique $m^{\text{th}}$-root of the full twist.

The above discussion implies that the existence of an isoclinic connection of slope $\nu$ and monodromy $\cC$ is equivalent to the non-emptiness of the braid variety associated to $(\widetilde{w_m})^d$ and $\cC$. We shall see that this in turn is equivalent to the condition 
\[
(Bw_mB)^d\cap \cC\neq \emptyset. 
\]

The problem of determining when the intersection of a Bruhat double coset with a conjugacy class is non-empty  has been studied extensively and is known to be difficult even for $\GL_n$, cf.\ \cite{EG04, EG07, CLT10, Lusztig, LusztigCSmall, Malten}. When  $w$ is an element of minimal length in its conjugacy class, Lusztig proved that there exists a (necessarily unique) unipotent conjugacy class $\cC_w$ such that a unipotent class $\cC$ satisfies $BwB\cap \cC\neq \emptyset$ if and only if $\cC\geq \cC_w$. This yields Lusztig's map 
\[
\begin{aligned}
\{\textrm{conjugacy classes in } W\} 
  &\longrightarrow \{\textrm{unipotent conjugacy classes in } G\} \\
[w] &\longmapsto \cC_w.
\end{aligned}
\]
For groups of exceptional type, Lusztig's proof relied on counting points over finite fields \cite{Lusztig}. As he observed, the count reduces to  character values of the finite reductive group $G(\Fq)$ and the finite Hecke algebra $H_q$.
 
  Our proof of Theorem~\ref{t:main} for exceptional groups follows the same strategy; that is, we count points on the relevant braid varieties (more precisely, braid stacks) over finite fields. As in Lusztig's case, the count reduces to character theory of $G(\Fq)$ and $H_q$. The necessary character  computations are carried out with the CHEVIE package for Julia developed by Michel \cite{Michel}. Note that $m$-Springer elements and their powers do not usually have minimal length in their conjugacy classes. Moreover, in general, we need to take powers of Bruhat double cosets. Thus, our main theorem is not a formal consequence of Lusztig's result. 

As a byproduct of Theorem \ref{t:main}, we obtain new examples of cohomologically rigid  isoclinic connections; see \S \ref{ss:rigidTable}. We show that these connections are physically rigid by proving that the corresponding braid varieties are singletons. Constructing the $\ell$-adic analogues of these connections and their Hecke eigensheaves is an interesting open problem.

Finally, it is natural to ask for explicit realisations of the isoclinic connections whose existence is guaranteed by our main theorem. We make partial progress in this direction by establishing Conjecture 5.7.(2) of \cite{SageReview}, concerning Coxeter $G$-connections with minimal monodromy.

\subsection{Structure of the text}
In \S \ref{s:Toral}, we introduce toral connections (which are a generalisation of isoclinic connections), and discuss the Riemann--Hilbert correspondence and the Deligne--Simpson problem for them. We also recall Lusztig's theorem and outline his proof. 

In \S \ref{s:isoclinic}, we focus on toral connections that are isoclinic. We first give an explicit description of their associated braids. Then, we prove Theorem \ref{t:main} by counting points on the corresponding braid stacks over finite fields. We conclude by discussing rigid isoclinic connections and explicit realisations of Coxeter connections. 

In the appendix, we give tables listing minimal unipotent classes for isoclinic connections and rigid connections. We also give sample computations with the code used to establish Theorem \ref{t:main}.

\subsection{Acknowledgements} We thank Konstantin Jakob, Ian Le, Dan Sage,  and Minh-T\^{a}m Trinh for helpful discussions. 
We are grateful to Willem A. de Graaf and Jean Michel for answering our questions about GAP and CHEVIE. Finally, we thank the Sydney Mathematical Research Institute for providing excellent working conditions and the Australian Research Council for financial support.

\section{Toral connections} \label{s:Toral}

Let $G$ be an almost simple complex algebraic group, $B\subset G$ a Borel subgroup, $T\subset B$ a maximal torus, $W$ the Weyl group, $\Phi$ the root system, and $\Phi^+$ the set of positive roots. Let $\fg$ and $\ft$ denote the Lie algebras of $G$ and $T$, respectively.

\subsection{Basic definitions}

For each positive integer $m$, let $\cK_m=\bC(\!(t^{1/m})\!)$, and write $\cK=\cK_1$. Let $\cD^\times=\mathrm{Spec}(\cK)$ denote the formal punctured disk. Let $\nabla$ be a formal $G$-connection, i.e., a connection on a $G$-bundle on $\cD^\times$. By a classical theorem of Turrittin and Levelt (cf. \cite{BabbittVaradarajan}), there exists a positive integer $m$ such that $\nabla$ is $G(\cK_m)$-gauge equivalent to a connection of the form
\begin{equation}\label{eq:connection}
d+(A(t^{1/m})+\cdots)\frac{dt}{t}.
\end{equation}
Here
\begin{equation} \label{eq:irregular} 
A(t^{1/m})=A_d\,t^{-d/m}+ A_{d-1}\, t^{-(d-1)/m} + \cdots + A_1t^{-1/m}\in t^{-1/m}\ft[t^{-1/m}],
\end{equation}
 $A_d$ is non-nilpotent, and the symbol $\cdots$ in \eqref{eq:connection} denotes an element of $\fg(\bC([\![t]\!]))$.

 The polynomial $A=A(t^{1/m})$ is called an \emph{irregular class} associated to $\nabla$, the element $A_d\in \ft$ is a \emph{Levelt--Turrittin leading term}, and the rational number $d/m$ is the \emph{slope} of $\nabla$.

An  element  $A(t^{1/m}) \in t^{-1/m}\ft[t^{-1/m}]$ is called an \emph{irregular class} if it arises as the irregular class of a formal connection on $\cD^\times$. Following \cite{HJ}, we call $A(t^{1/m})$ \emph{toral} if the only Weyl group element fixing $A_d, A_{d-1}, \cdots, A_1$ is the trivial element. Likewise, a formal $G$-connection is called \emph{toral} if its irregular class is toral. This condition is independent of the choice of $m$ and the gauge transformation.

  \subsection{Riemann--Hilbert Correspondence} Let $A=A(t^{1/m})\in \ft[t^{-1/m}]$ be a toral irregular class and $\cC\subset G$ a conjugacy class. 
  Let $\Conn(A,\cC)$ denote the stack of $G$-connections on $\bP^1-\{0,\infty\}$ with an irregular singularity at $0$ of irregular class $A$, and a regular singularity at $\infty$ with monodromy $\cC$. Our aim is to describe the Betti analogue of this stack.

Let $B_W^+$ denote the positive braid monoid. As explained in \cite[\S 4.3.3]{BBAMY}, the element $A(t^{1/m})$ determines a positive braid $\beta_A\in B_W^+$, unique up to a cyclic shift. We now define the corresponding braid stack. 

Let $\fB$ denote the flag variety of $G$. 
Given $w\in W$, we write $\widetilde{w}\in B_W^+$ for its positive lift. 
For a positive braid $\beta=\widetilde{w_1}\cdots \widetilde{w_d}\in B_W^+$, define the corresponding braid stack by
\[
\cM(\beta,\cC):=\bigg\{
(g,F_1,\dots,F_{d+1})\in \cC\times \fB^{d+1}
\,\bigg|\,
gF_1=F_{d+1}, \, \, w(F_i,F_{i+1})=w_i,\; i=1,\ldots,d
\bigg\}\bigg/G,
\]
where $w(F,F')\in W$ denotes the relative position of the flags $F$ and $F'$. (The associated braid variety is defined by taking the GIT quotient instead.) One readily checks that 
\[
\cM(\widetilde{w}, \cC) = (BwB\cap \cC)/B.
\]

\begin{thm} 
There is an equivalence of groupoids
$\Conn(A,\cC)(\bC)\simeq \cM(\beta_A,\cC)(\bC)$. 
\end{thm}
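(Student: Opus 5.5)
This equivalence should follow from the irregular Riemann--Hilbert correspondence of Boalch and Hohl--Jakob, specialised to $\bP^1-\{0,\infty\}$ with one toral irregular point and one regular point. The plan has four steps.

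\textbf{Step 1.} Apply the irregular Riemann--Hilbert (Stokes) correspondence. A connection in $\Conn(A,\cC)$ corresponds to a Stokes $G$-local system on the punctured sphere. This consists of a $G$-local system on $\bC^\times$ together with the following data near $0$: a reduction to the torus $T$ on a small circle, twisted by the formal monodromy of $A$, and Stokes filtrations indexed by the Stokes directions. The Stokes directions are the arguments where some $\operatorname{Re}\,\alpha(A)$ changes sign, for $\alpha\in\Phi$.

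Torality is used here. Because no nontrivial $w\in W$ fixes all $A_i$, the centraliser of the irregular class is exactly $T$. So the formal type is rigidified up to $T$, and the formal monodromy is an element of $N(T)$ determined by $A$ up to a $T$-factor.

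\textbf{Step 2.} Rewrite Stokes data as a chain of flags. Following \cite[\S 4.3.3]{BBAMY} and \cite{HJ}, as one goes around the circle at $0$, each Stokes direction contributes a unipotent Stokes group generated by the root groups $U_\alpha$ for the roots that are "crossing" there. Choosing a base point in each sector between consecutive Stokes rays, the parabolic/Borel reductions on the sectors give flags $F_1,\dots,F_{d+1}$. Consecutive flags are in relative position $w_i$, where $\widetilde{w_1}\cdots\widetilde{w_d}=\beta_A$ is the braid read off from the crossings. Going once around the circle, the monodromy of the local system identifies $F_{d+1}$ with the image of $F_1$. This local monodromy is the product of the Stokes multipliers and the formal monodromy.

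\textbf{Step 3.} Glue with the point at $\infty$. The fundamental group of $\bC^\times$ is $\mathbb{Z}$, so the loop around $0$ is inverse to the loop around $\infty$. The regular singularity condition therefore says exactly that this monodromy $g$ lies in $\cC$. Forgetting the chosen trivialisation of the fibre gives the quotient by $G$, which yields $\cM(\beta_A,\cC)$ on $\bC$-points.

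One must check that the $T$-ambiguity in the framing is absorbed correctly. My approach would be to use the fact that a flag chain with prescribed positions $w_i$ determines the intermediate Borels, so that the stabiliser data match the automorphisms on both sides. I would compare automorphism groups on both sides to confirm this is an equivalence of groupoids, not merely a bijection on isomorphism classes.

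\textbf{Main obstacle.} I expect the main difficulty to be Step 2, namely the precise identification of the Stokes data with the braid $\beta_A$ (including the twist by formal monodromy, which makes the braid well defined only up to cyclic shift). I would handle this by invoking the combinatorial description in \cite{BBAMY} and the theorem of \cite{HJ} directly, rather than re-deriving it.
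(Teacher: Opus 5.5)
Your proposal follows essentially the same route as the paper, which gives no independent argument and simply cites Boalch for $\GL_n$ and Hohl--Jakob (Theorem 3.5.3 and Proposition 4.2.2) for general $G$. Your Steps 1--3 are a reasonable heuristic gloss on those results. As you acknowledge, the substantive parts are imported from Hohl--Jakob rather than proved: identifying the Stokes data with a flag chain of type $\beta_A$, and matching automorphisms including the torus framing.
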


For $G=\GL_n$, this follows from results of Boalch \cite{BoalchTopology}. For general $G$, see \cite{HJ}, Theorem 3.5.3 and Proposition 4.2.2. 
It is expected that the above equivalence can be upgraded to an isomorphism of complex analytic stacks, cf. \cite[Conjecture 4.4.4]{BBAMY}.

\subsection{The Deligne--Simpson problem} 
Let $A \in t^{-1/m}\ft[t^{-1/m}]$ be an irregular class and $\cC\subset G$ a conjugacy class. 
The corresponding Deligne--Simpson problem  asks when $\Conn(A,\cC)$ is non-empty.
By the above discussion, when $A$ is toral, this problem is equivalent to determining when  $\cM(\beta_A,\cC)$ is non-empty.

We now give a group-theoretic criterion for the non-emptiness of $\cM(\beta,\cC)$ for a general positive braid $\beta=\widetilde{w_1}\cdots \widetilde{w_n}\in B_W^+$. Let
\[
B(\beta):=\prod_{i=1}^n (Bw_iB)\subseteq G.
\]
\begin{prop}\label{p:group}
We have $\cM(\beta,\cC)\neq\emptyset$ if and only if $B(\beta)\cap \cC\neq\emptyset$.
\end{prop}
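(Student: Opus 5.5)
The plan is to translate sequences of flags in prescribed relative position into products of Bruhat cells by moving everything to the base flag $F_1=B$. Identify $\fB=G/B$ and use the $G$-action to normalise. Since $G$ acts transitively on $\fB$, every point of the (pre-quotient) space defining $\cM(\beta,\cC)$ is $G$-conjugate to one with $F_1=B$. So $\cM(\beta,\cC)\neq\emptyset$ if and only if there exist $g\in\cC$ and flags $F_2,\dots,F_{n+1}$ with $F_1=B$, $w(F_i,F_{i+1})=w_i$ and $gB=F_{n+1}$. The task is then to show that the set of possible endpoints $F_{n+1}$ is exactly $B(\beta)/B$.

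First I will record the standard fact that if $F=xB$, then the flags $F'$ with $w(F,F')=w$ are precisely $F'=xbw B$ with $b\in B$, i.e. $F'\in xBwB/B$. This is just the definition of relative position via the Bruhat decomposition: $w(xB,yB)=w$ iff $x^{-1}y\in BwB$. By induction on $i$, starting from $F_1=B$, I then show that the set of flags $F_{i+1}$ reachable by chains $F_1,\dots,F_{i+1}$ with the prescribed positions is exactly $(Bw_1B)(Bw_2B)\cdots(Bw_iB)/B$. Concretely, if $F_i=yB$ with $y\in Bw_1B\cdots Bw_{i-1}B$, then $F_{i+1}=yb w_iB$ with $yb w_i\in Bw_1B\cdots Bw_iB$, since right multiplication by $B$ preserves the product. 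Conversely, any $z=y\,b w_i b'$ in the product gives the chain $F_i=yB$, $F_{i+1}=zB$. Note that the product is right $B$-stable, so passing to $G/B$ loses nothing.

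With this in hand, the condition $gB=F_{n+1}$ becomes $gB\in B(\beta)/B$, i.e. $g\in B(\beta)B=B(\beta)$. Hence a point exists iff some $g\in\cC$ lies in $B(\beta)$, which is exactly $B(\beta)\cap\cC\neq\emptyset$. For the reverse direction, given $g\in B(\beta)\cap\cC$, I write $g=x_1\cdots x_n$ with $x_i\in Bw_iB$ and set $F_1=B$ and $F_{i+1}=x_1\cdots x_iB$; then $w(F_i,F_{i+1})=w_i$ because $x_i\in Bw_iB$, and $F_{n+1}=gB=gF_1$. As a sanity check, when $n=1$ this recovers $\cM(\widetilde w,\cC)=(BwB\cap\cC)/B$.

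There is no serious obstacle here; the only point needing care is the bookkeeping that relative position is $G$-invariant and the convention that $w(xB,yB)$ is the Bruhat cell of $x^{-1}y$. One should also note that non-emptiness of a quotient stack is equivalent to non-emptiness of the underlying space, so the quotient by $G$ plays no role.
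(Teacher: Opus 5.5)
Your proposal is correct and follows essentially the same route as the paper. You normalise $F_1=B$ using the $G$-action, build the chain $F_{i+1}=b_1w_1\cdots b_iw_iB$ from the Bruhat description of relative position, read off $g\in B(\beta)$ from $gB=F_{n+1}$, and for the converse use the explicit chain $F_{i+1}=x_1\cdots x_iB$; your inductive description of the reachable flags as $(Bw_1B)\cdots(Bw_iB)/B$ is just a more systematic packaging of the paper's step-by-step argument.
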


\begin{proof}
Suppose $B(\beta)\cap \cC\neq\emptyset$ and choose an element
\[
g=b_1w_1b_2w_2\cdots b_nw_nb_{n+1}\footnote{Here, we have slightly abused notation by writing $w_i$ instead of a chosen lift of $w_i$ to $G$.}
\]
in this intersection. 
Then
\[
(g,\, B,\, b_1w_1B,\, b_1w_1b_2w_2B,\,\dots,\, gB)\in \cM(\beta,\cC).
\]

Conversely, suppose $\cM(\beta,\cC)\neq\emptyset$ and choose $(g,F_1,\dots,F_{n+1})$ in this stack. 
Using the $G$-action, we may assume $F_1=B$. 
The condition $w(F_1,F_2)=w_1$ implies $F_2=b_1w_1B$ for some $b_1\in B$. 
Similarly, the condition $w(F_2,F_3)=w_2$ implies $F_3=b_1w_1b_2w_2B$ for some $b_2\in B$. 
Continuing in this way, we obtain $F_{n+1}=gB$, where 
\[
g=b_1w_1\cdots b_nw_nb_{n+1},\qquad b_i\in B.
\] 
By assumption $g\in \cC$; thus, we have
$g\in \Big(\prod_{i=1}^n Bw_iB\Big)\cap \cC$, 
as required.
\end{proof}

For future reference, we record the following simple result: 
\begin{lem} \label{l:Full}
Suppose $\beta\in B_W^+$ contains the full twist $\Pi_W$. Then $B(\beta)=G$; in particular, $B(\beta)\cap \cC$ is non-empty for every conjugacy class $\cC\subset G$. 
\end{lem}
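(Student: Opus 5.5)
The plan is to reduce to the case $\beta=\Pi_W$ and then show $B\,w_0B\,w_0\cdots$ fills $G$. First I need to make precise what ``$\beta$ contains $\Pi_W$'' should mean. Recall $\Pi_W=\widetilde{w_0}^{\,2}$. I will take it to mean that $\beta=\beta_1\Pi_W\beta_2$ with $\beta_1,\beta_2\in B_W^+$. Since $B(\beta)$ is defined from a chosen word $\widetilde{w_1}\cdots\widetilde{w_n}$, I first check that $B(\beta)$ depends only on the braid. The point is that for $\ell(ww')=\ell(w)+\ell(w')$ one has $BwB\cdot Bw'B=Bww'B$, and the braid relations in $B_W^+$ are generated by such length-additive factorisations. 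Hence
\[
B(\beta)=B(\beta_1)\,(Bw_0B)(Bw_0B)\,B(\beta_2).
\]

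The key step is to show $(Bw_0B)(Bw_0B)=G$. Let $U^-=w_0Uw_0^{-1}$. Then
\[
Bw_0Bw_0B=Bw_0Bw_0^{-1}w_0B=B\,U^-T\,w_0B,
\]
which contains $BU^-B\cdot w_0$. Since $BU^-=BU^-T\cdots$ contains the big cell $BB^-=U T U^-$, which is open dense, I get that $X:=Bw_0Bw_0B$ contains a dense open subset of $G$. In fact it is cleaner to observe that $X=X^{-1}$ up to translation and to argue as follows. Write $X=Bw_0B\cdot Bw_0B$ as a product of two sets. Each of these sets is a dense open subset of $G$, namely the big Bruhat cell $Bw_0B$. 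For any $g\in G$, the sets $Bw_0B$ and $g(Bw_0B)^{-1}=gBw_0^{-1}B$ are both dense open. Hence they intersect. An element $x$ in the intersection gives $x\in Bw_0B$ and $x^{-1}g\in Bw_0B$, so $g=x\cdot x^{-1}g\in (Bw_0B)^2$. Thus $(Bw_0B)^2=G$.

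Finally, $B(\beta)=B(\beta_1)\,G\,B(\beta_2)=G$ because $B(\beta_i)$ are nonempty. Therefore $B(\beta)\cap\cC=\cC\neq\emptyset$ for every conjugacy class $\cC$. By Proposition~\ref{p:group} this is the corresponding non-emptiness statement. The only point requiring care is the well-definedness of $B(\beta)$ under braid relations and the interpretation of ``contains''. If ``contains'' is meant up to cyclic shift or conjugation in the braid group, I would note instead that cyclic shifts conjugate $B(\beta)$ inside $G$. Since the set is all of $G$, this is harmless.
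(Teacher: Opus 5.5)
Your proof is correct and is essentially the paper's argument: in the connected group $G$, for each $g$ the dense open sets $Bw_0B$ and $g(Bw_0B)^{-1}$ must meet, which gives $(Bw_0B)^2=G$. You also make explicit the reduction $B(\beta)=B(\beta_1)\,(Bw_0B)^2\,B(\beta_2)$ via length-additivity $BwB\cdot Bw'B=Bww'B$, which the paper leaves implicit; the aside about $BU^-B\cdot w_0$ is not needed and can be dropped.
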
 

\begin{proof} It is sufficient to show that 
\[
B(\Pi_W) := (Bw_0 B)(Bw_0 B) = G. 
\]
This follows from the fact that in a connected algebraic group $G$, the product $UV$ of two dense open subsets equals the whole group. Indeed, for every $g\in G$, $gV^{-1}$ and $U$ are also dense open subsets; thus, their intersection is non-empty. This implies that $g=uv$ for some $u\in U$ and $v\in V$. 
\end{proof}

\subsection{Nice braids}\label{ss:Nice}
Let us call a braid $\beta\in B_W^+$ \emph{nice} if there exists a (necessarily unique) unipotent class $\cC_\beta\subset G$ such that for any unipotent class $\cC\subset G$, we have $\cM(\beta, \cC)\neq \emptyset$ if and only if $\cC\geq \cC_\beta$. It seems reasonable to conjecture that braids arising from toral irregular classes are nice. Our main theorem amounts to the statement that braids arising from {isoclinic} irregular classes are nice. This will be discussed further below. In this subsection, we recall a related result of Lusztig. 

\begin{thm}[\cite{Lusztig}] 
Suppose $w\in W$ has minimal length in its conjugacy class. Then $\widetilde{w}$ is nice.
\end{thm}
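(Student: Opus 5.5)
The proof follows Lusztig's strategy from \cite{Lusztig}. For $w$ of minimal length in its conjugacy class we have $\cM(\widetilde w,\cC)=(BwB\cap\cC)/B$, so by Proposition~\ref{p:group} we must find a unipotent class $\cC_w$ with $BwB\cap\cC\neq\emptyset \iff \cC\geq\cC_w$. There are two halves. The first is \emph{closure}: if $BwB\cap\cC\neq\emptyset$ and $\cC'\geq\cC$, then $BwB\cap\cC'\neq\emptyset$. The second is \emph{existence of a unique minimum}: among the unipotent classes meeting $BwB$ there is one that lies below all the others.

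For the closure half, I would use a dimension argument. Let $\mathcal{U}$ be the unipotent variety. The set $BwB\cap\mathcal{U}$ is $B$-stable, and for $w$ elliptic (the reduction to elliptic $w$ via Levi subgroups is standard, using minimal length elements of parabolic subgroups) Lusztig shows that every unipotent class $\cC$ meeting $BwB$ satisfies
\[
\dim(BwB\cap\cC)\le \tfrac12(\dim\cC+\ell(w)) - \text{const},
\]
with equality attained by the classes of minimal dimension. Combined with the fact that $\overline{BwB\cap\cC}$ is a union of intersections with smaller classes, this lets us propagate non-emptiness upward. An alternative route is to note that the set $\{g\in G: g\ \text{is conjugate into } BwB\}$ is closed under "limits" of conjugacy classes, since $G\cdot(BwB)$ contains closures of classes inside the irreducible set $\overline{BwB}\cap\mathcal U$ with $w$ fixed. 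I expect this half to need care, because $BwB$ is not closed.

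For the minimum, I would count points over $\Fq$. Using the Frobenius formula, $|BwB\cap\cC^F|$ equals $\frac{|\cC^F|\,|B^F|}{|G^F|}\sum_{\rho}\rho(g_\cC)\,\mathrm{tr}(T_w,\rho_q)$, where the sum runs over the unipotent principal series representations $\rho$ of $G(\Fq)$, $\rho_q$ is the corresponding $H_q$-module, $T_w$ is the standard basis element of $H_q$, and $g_\cC$ is a representative of $\cC^F$. For $w$ of minimal length, the Geck--Pfeiffer theory of character values $\mathrm{tr}(T_w,\rho_q)$ shows these depend only on the class of $w$. The resulting polynomial in $q$ can then be tested for non-vanishing. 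For classical groups, a combinatorial analysis via Green functions identifies $\cC_w$ as the class of smallest dimension for which the polynomial is non-zero, and one checks that the polynomial is non-zero for all larger classes. For exceptional groups this becomes a finite computer verification over all classes of $W$ and all unipotent classes, for instance in CHEVIE.

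The main obstacle is the uniform argument for the classical types. There, uniqueness of the minimal class requires an explicit description of $\cC_w$ via partitions attached to signed cycle types, together with a lower-bound argument showing non-emptiness for every larger class. My first attempt would be to construct explicit elements of $BwB$ inside block Jordan forms, working cycle by cycle for elliptic $w$, and then pass to non-elliptic $w$ by induction from Levis.
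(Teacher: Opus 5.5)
Your treatment of the minimum for exceptional types is the approach the paper outlines: the Hecke/$G^F$ character formula, Geck--Pfeiffer independence of $\mathrm{tr}(T_w,E_q)$ from the choice of minimal-length $w$, and a CHEVIE computation. One small correction: $\cC^F$ can split into several $G^F$-classes on which $\rho_E$ takes different values, so you must sum over all of them rather than use a single $g_{\cC}$.

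The genuine gap is the ``closure'' half, i.e.\ the implication $\cC\geq\cC_w\Rightarrow BwB\cap\cC\neq\emptyset$. Both arguments you offer run in the wrong direction.
\begin{enumerate}
\item Since $\overline{BwB\cap\cC}\subseteq\overline{BwB}\cap\overline{\cC}$, taking closures only produces points of \emph{smaller} classes. In general these points lie in smaller Bruhat cells, so not even in $BwB$.
\item The claim that $G\cdot(BwB)$ contains the closure of every class it meets is false. As soon as some unipotent class meets $BwB$, it would put $1$ in $G\cdot(BwB)$, whereas $1\notin BwB$ for $w\neq 1$.
\item An upper bound on $\dim(BwB\cap\cC)$ for classes that do meet $BwB$ can never show that a further class meets it. Your displayed bound is also false. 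Take $G_2$ and $w=w_0$, which is central (hence of minimal length) and elliptic. Since $Bw_0B$ is open dense, $\dim(Bw_0B\cap\cC)=\dim\cC$ for every class meeting it. A bound of the form $\tfrac12\dim\cC+\mathrm{const}$ that is sharp on $\cC_{w_0}=\widetilde{A_1}$ would then force $\dim\cC\le\dim\widetilde{A_1}$ for the regular class, which is absurd.
\end{enumerate}
A dimension argument could only work by comparing the lower bound $\dim\cC'+\ell(w)-|\Phi^+|$ on components of $\overline{\cC'}\cap BwB$ with genuine upper bounds on its boundary strata. You have neither.

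In the argument the paper outlines, upward closure is not proved separately. One evaluates the count for \emph{every} unipotent class and observes that it is non-zero exactly when $\cC\geq\cC_w$. A non-zero count gives a point. A count vanishing for all $q$ gives emptiness, since a non-empty variety over $\Fq$ has points over some finite extension. So for exceptional types, drop the closure half and read off the full non-vanishing pattern. This also makes your Levi reduction unnecessary, since the formula holds for every $w$. That is just as well: the reduction is straightforward only for the minimum (contract the unipotent radical by a cocharacter central in the Levi), not for upward closure.

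For classical types your proposal is a plan rather than a proof. Finding the class of smallest dimension with non-zero count does not suffice. Niceness requires the count to vanish for every $\cC\not\geq\cC_w$, including classes of larger dimension not comparable with $\cC_w$. The paper's $F_4$ example shows that, for general $w$, the non-vanishing set need not be an upper interval. Niceness also requires the count to be non-zero for every $\cC\geq\cC_w$, uniformly in the rank. Saying ``one checks'' this is precisely asserting the content of the theorem there. The paper does not supply this case either; it only outlines the exceptional computation and cites Lusztig. Measured against the paper, your argument is adequate for exceptional groups once the closure half is replaced as above, but it does not prove the statement for classical groups.
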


Let us outline Lusztig's proof for groups of exceptional type. Let $k$ be an algebraic closure of a finite field $\Fq$, and consider an almost simple algebraic group over $k$ with the same root system as $G$. By abuse of notation, we denote this group by $G$ as well. Assume that the characteristic of $k$ is a good prime for $G$. Then, the unipotent classes of $G(k)$ are in bijection with the unipotent classes of $G(\bC)$, cf.\ \cite{Premet}.

Now, choose an $\Fq$-split rational structure on $G$, and let $F:G\to G$ denote the Frobenius map. Let $H_q$ be the finite Hecke algebra. The space of functions $\cF=\mathbb{C}[\fB^F]$ then carries the structure of a $(\mathbb{C}[G^F],H_q)$-bimodule. We write $(g,T_w)$ for the operator $f\mapsto g\cdot f\cdot T_w$, where $f\in \cF$.

Recall that there is a canonical bijection between irreducible representations of $W$ and irreducible unipotent principal series representations of the finite group $G^F$. For $E\in \Irr(W)$, we let $\rho_E$ denote the corresponding unipotent representation of $G^F$. Likewise, there is a canonical bijection between irreducible representations of $H_q$ and irreducible representations of $W$; we denote by $E_q$ the representation of $H_q$ corresponding to $E$.

In \cite[\S 1.2]{Lusztig}, Lusztig observes that
\begin{equation} \label{eq:Lusztig}
\big|\big((BwB\cap \cC)/B\big)^F\big|
= \frac{1}{|G^F|} \sum_{g\in \cC^F} \mathrm{tr}((g,T_w),\cF)
= \frac{1}{|G^F|} \sum_{g\in \cC^F}\sum_{E\in \mathrm{Irr}(W)}
\mathrm{tr}(T_w,E_q)\,\mathrm{tr}(g,\rho_E).
\end{equation}

For groups of exceptional types, Lusztig explicitly computes the right-hand side using a computer and determines when it is non-zero.

 Using \eqref{eq:Lusztig}, one can also find elements $w\in W$ such that $\widetilde{w}$ is not nice. One example is when $G$ is of type $F_4$ and $w$ is $s_2 s_3 s_2 s_4 s_3 s_2 s_3$; see \S\ref{ss:examplecode}. Note that this is a reduced expression for $w$; however, $w$ does not have minimal length in its conjugacy class, since it is conjugate to $s_4s_3s_2$.

\section{Isoclinic connections} \label{s:isoclinic}

An irregular class $A=A_d\, t^{-d/m}+\cdots +A_1t^{-1/m} \in t^{-1/m}\ft[t^{-1/m}]$ is called \emph{isoclinic} if its leading coefficient $A_d$ is regular semisimple, i.e., $\mathrm{Stab}_W(A_d)=\{1\}$.   Likewise, a formal $G$-connection is called \emph{isoclinic} if its irregular class is isoclinic.

Our first goal is to determine the braid $\beta_A$ associated to an isoclinic irregular class. To this end, we need to recall some facts about regular elements in $W$.

\subsection{Springer elements}

Let $m$ be a regular number for $W$. By \cite[Proposition 4.10]{Springer}, there exist $w\in W$ and $x\in \ft$ such that
\[
w\cdot x=e^{2\pi i/m}x \qquad \text{and} \qquad \mathrm{Re}(\alpha(x))>0 \ \text{for all}\ \alpha\in \Phi^+.
\]
Following Brou\'{e} and Michel \cite{BM}, we call $w$ an \emph{$m$-Springer element}. For example, when $W=S_4$ with the standard Coxeter structure, $s_1s_3s_2$ is a Springer element while $s_1s_2s_3$ is not. We refer the reader to Appendix 1 of \op for a list of Springer elements in all types. 

Let $B_W$ denote the Artin braid group of $W$. 

\begin{thm} \label{t:Springer}
Suppose $w\in W$ is an $m$-Springer element.
\begin{enumerate} 
\item If $m$ is elliptic, then $w$ has minimal length in its conjugacy class. 
\item For all positive integers $d<m/2$, we have $\ell(w^d)=d\,\ell(w)=d\, |\Phi|/m$. 
\item We have $(\widetilde{w})^m=\Pi_W$.
\item Every $m^{\text{th}}$ root of $\Pi_W$ is $B_W$-conjugate to $\widetilde{w}$. 
\item Suppose $w'$ is an element of $W$ such that $\big(\widetilde{w'}\big)^m=\Pi_W$. Then $w$ and $w'$ are related by a cyclic shift. 
\end{enumerate}
\end{thm}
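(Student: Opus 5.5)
This theorem collects known results of Springer, Broué--Michel and Bessis, so the plan is to derive each item from those sources while keeping the geometric picture in view. Fix $x\in\ft$ with $w\cdot x=\zeta x$, where $\zeta=e^{2\pi i/m}$ and $\mathrm{Re}\,\alpha(x)>0$ for all $\alpha\in\Phi^+$. For $d\geq 1$ the element $w^d$ sends $x$ to $\zeta^d x$. Whether a positive root $\alpha$ lies in $N(w^d)=\{\alpha>0: w^d\alpha<0\}$ can then be read off from the argument of $\alpha(x)$ after rotation by $\zeta^{\pm d}$. Concretely, put $\theta_\alpha=\arg\alpha(x)\in(-\pi/2,\pi/2)$. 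The root $w^{-d}\alpha$ evaluated at $x$ equals $\alpha(w^dx)=\zeta^d\alpha(x)$, up to the sign and convention for the action. Hence $\alpha\in N(w^{-d})$ (or $N(w^d)$) exactly when $\mathrm{Re}(\zeta^d\alpha(x))<0$, i.e.\ when $\theta_\alpha+2\pi d/m\notin(-\pi/2,\pi/2)$ modulo $2\pi$.

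\textbf{Item (2).} The plan is to count these angles. The $\langle w\rangle$-orbits on $\Phi$ are free of size $m$, since $w$ is regular of order $m$. Each orbit $\{w^k\beta\}$ has values $\zeta^k\beta(x)$, which are $m$ equally spaced points on a ray pattern. Exactly half of the $m$ points in each orbit lie in the right half-plane, and these are the positive roots. For $d<m/2$, the rotation by $2\pi d/m$ is less than $\pi$, so the set of orbit points moved out of the right half-plane has size exactly $d$ per orbit. The number of orbits is $|\Phi|/m$, which gives $\ell(w^d)=d|\Phi|/m$; the case $d=1$ gives $\ell(w)=|\Phi|/m$. The hypothesis $d<m/2$ matters because the count must stay additive: no root may leave the half-plane and re-enter it.

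\textbf{Item (3).} This follows from the same additivity, $\ell(w^{d+1})=\ell(w^d)+\ell(w)$. Additivity of lengths is equivalent to $\widetilde{w^{d}}\,\widetilde{w}=\widetilde{w^{d+1}}$ in $B_W^+$. To cover all $d$ up to $m$, we work in the braid group via the loop in the regular orbit space $\ft^{\mathrm{reg}}/W$ given by $t\mapsto e^{2\pi i t/m}x$ for $t\in[0,1]$. Following Broué--Michel, this path represents $\widetilde{w}$, because $x$ lies in the Weyl-chamber-like region $\mathrm{Re}\,\alpha>0$ and the path crosses exactly the walls indexed by $N(w)$, each positively. Its $m$-th power is the loop $t\mapsto e^{2\pi i t}x$, which represents the full twist $\Pi_W$.

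\textbf{Item (1)} is Springer's/Broué--Michel's result for elliptic regular elements. For item (1) I would simply cite Broué--Michel, noting that their minimal-length criterion also follows from the length formula combined with He--Nie's characterization. \textbf{Items (4) and (5)} are Bessis's theorem from the $K(\pi,1)$ paper, which states that roots of $\Pi_W$ in $B_W$ are conjugate, applied with Broué--Michel's identification. Item (5) refines (4) inside the positive monoid. If $(\widetilde{w'})^m=\Pi_W$, then $\widetilde{w'}$ is conjugate to $\widetilde{w}$ by (4). Two simple elements that are conjugate in $B_W$ and are both $m$-th roots of $\Pi_W$ are related by cyclic conjugations, via Garside theory of the super-summit set; here I would cite Bessis/Broué--Michel. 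The main obstacle is this last step, passing from abstract $B_W$-conjugacy to explicit cyclic shifts of reduced words, and I would rely on the cited literature rather than reprove it.
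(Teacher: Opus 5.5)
Your outline is sound, but it takes a different route from the paper. The paper's proof is purely by citation: Springer's Proposition 4.10 for (1), Brou\'{e}--Michel's Propositions 3.9 and 3.12 for (2) and (3), Bessis's Theorem 12.4 for (4), and Brou\'{e}--Michel's Theorem 3.17 for (5).

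\textbf{How your route differs.} You also cite for (4), and essentially for (1) and (5). For (2) and (3), however, you reconstruct the mechanism. You count, orbit by orbit, the root values $\zeta^k\beta(x)$ that are rotated out of the right half-plane. You then realise $\widetilde{w}$ as the rotation path $t\mapsto e^{2\pi i t/m}x$ in $\ft^{\mathrm{reg}}/W$, whose $m$-th power is the full-twist loop. This buys transparency: one sees why $d$ must be at most $m/2$, and why the lift is positive. The price is several checks you only gesture at:
\begin{itemize}
\item transporting the base point through the convex set $\{y : \mathrm{Re}\,\alpha(y)>0 \text{ for all } \alpha\in\Phi^+\}\subset\ft^{\mathrm{reg}}$;
\item verifying that every wall is crossed with the same sign of $\mathrm{Im}\,\alpha$;
\item perturbing to generic crossings.
\end{itemize}
The additivity detour at the start of (3) only reaches exponents up to about $m/2$, and the path argument supersedes it.

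\textbf{A false claim in (2).} The sentence ``exactly half of the $m$ points in each orbit lie in the right half-plane'' cannot hold for odd $m$. A Coxeter element of $A_2$, with $m=3$, is an example. For odd $m$ we have $-1\notin\langle\zeta\rangle$, so the orbits of $\beta$ and $-\beta$ are distinct; one contains $(m+1)/2$ positive roots and the other $(m-1)/2$.

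You do not need this claim. For $\alpha>0$, one has $w^{-d}\alpha<0$ if and only if $\arg\alpha(x)$ lies in the open arc $(\pi/2-2\pi d/m,\pi/2)$. This arc lies inside the right half-plane because $2\pi d/m<\pi$. An orbit of $m$ equally spaced values has no point on the imaginary axis. Rotating by $d$ steps, it therefore has no point at the other endpoint either. Hence it meets the arc in exactly $d$ points, for either parity of $m$.

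\textbf{Getting (1) from your count.} The same count gives (1) directly, without He--Nie. Any conjugate $w'$ of $w$ acts freely on $\Phi$ with $|\Phi|/m$ orbits. Ellipticity forces each orbit sum, which is $w'$-fixed, to vanish, so each orbit contains roots of both signs. Going around the cycle, each orbit then contains some $\gamma>0$ with $w'\gamma<0$. Hence $\ell(w')\ge|\Phi|/m=\ell(w)$.

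\textbf{On (5).} Garside connectivity of super summit sets only yields conjugations by simple elements, and these need not be prefixes of $\widetilde{w}$. So it does not by itself produce cyclic shifts. You still need a dedicated result such as Brou\'{e}--Michel's Theorem 3.17, which is what the paper invokes; it does not go through (4).
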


\begin{proof} Part (1) is proved in Proposition 4.10 of \cite{Springer}. Part (2) is proved in \cite[Proposition 3.9]{BM}. Part (3) is Proposition 3.12 of \cite{BM}. Part (4) is Theorem 12.4 of \cite{BessisKPi1}. Part (5) follows from Theorem 3.17 of \cite{BM}. 
\end{proof}

Let $W(m) \subseteq W$ denote the conjugacy class corresponding to a regular number $m$. We have seen that there exist elements of $W(m)$ which are not $m$-Springer with respect to our choice of positive roots. However, by \cite[Proposition 4.10]{Springer}, for every $w \in W(m)$, there exists a choice of positive roots with respect to which $w$ becomes an $m$-Springer element. In particular, for any $w \in W(m)$, there is a choice of positive roots such that the lift $\widetilde{w} \in B_W^+$ of $w$ with respect to this choice is an $m^{\mathrm{th}}$-root of $\Pi_W$.

\subsection{The braid associated to an isoclinic class}

Let $A$ be an isoclinic irregular class of slope $\nu=d/m$, where $\gcd(d,m)=1$. Then $m$ is a regular number for $W$ \cite[Lemma 2.2]{JY}. 
Let $w_m$ be an  $m$-Springer element. 

\begin{prop}
Up to cyclic shift, we have $\beta_A=(\widetilde{w_m})^d$. 
\end{prop}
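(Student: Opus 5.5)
The plan is to unwind the construction of $\beta_A$ from \cite[\S 4.3.3]{BBAMY} in the isoclinic case and then apply Theorem \ref{t:Springer}. Recall that $\beta_A$ is obtained as follows. One considers the multivalued functions $q_\alpha(t)=\alpha(A(t^{1/m}))$ for $\alpha\in\Phi$. On a small circle around $0$ one records the Stokes directions, where $\mathrm{Re}\, q_\alpha$ changes sign. As the argument $\theta$ of $t$ runs from $0$ to $2\pi$, one records the order in which the Stokes rays of the various roots are crossed. The result is a positive word in the simple generators, read off relative to a chamber. Up to the choice of starting direction, which accounts for the cyclic shift, this word is $\beta_A$.

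In the isoclinic case only the leading term matters for the ordering. Since $A_d$ is regular semisimple, $\alpha(A_d)\neq0$ for all $\alpha$, so $q_\alpha(t)\sim \alpha(A_d)t^{-d/m}$. The Stokes rays of $\alpha$ are then determined by the condition $\mathrm{Re}\big(\alpha(A_d)e^{-id\theta/m}\big)=0$, and lower-order terms do not change the combinatorics.

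Next I would exploit the monodromy symmetry. Going once around, $t^{1/m}\mapsto e^{2\pi i/m}t^{1/m}$. Because $A$ is an irregular class of a connection over $\cK$, this substitution must agree with the action of some $w\in W$ on $A$: $w\cdot A(t^{1/m})=A(e^{2\pi i/m}t^{1/m})$. Taking leading terms gives $w\cdot A_d=e^{-2\pi i d/m}A_d$. Since $\gcd(d,m)=1$, some power of $w$ is a regular element of order $m$ with eigenvector $A_d$. After choosing positive roots so that $\mathrm{Re}\,\alpha(x)>0$ for the appropriate rotate $x$ of $A_d$, which is possible by the remark following Theorem \ref{t:Springer}, this element is an $m$-Springer element $w_m$.

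Now as $\theta$ increases by $2\pi m/d$'s worth of rotation, the vector $e^{-id\theta/m}A_d$ rotates through angle $2\pi$. I would therefore argue directly that the braid traced by the path $\theta\mapsto e^{-id\theta/m}A_d$ in the regular locus of $\ft$ modulo $W$ is the following. Over $\theta\in[0,2\pi/d]$ (rotation by $2\pi/m$) it gives the lift of $w_m$, which is the positive braid $\widetilde{w_m}$, as in the Brou\'e--Michel/Bessis description of $\widetilde{w_m}$ as the loop $\theta\mapsto e^{i\theta}x$, $\theta\in[0,2\pi/m]$. Over the full circle $\theta\in[0,2\pi]$, the path is the concatenation of $d$ such arcs. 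So $\beta_A$ is conjugate to $(\widetilde{w_m})^d$, and within the positive monoid the Stokes-ordering construction yields it up to cyclic shift.

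The main obstacle is this identification: matching the Stokes-ray crossing word of \cite{BBAMY} with the standard positive lift of the loop $e^{i\theta}x$ in $\ft^{\mathrm{reg}}/W$. For this I would check that each Stokes crossing of a root $\alpha$ corresponds to $e^{i\theta}x$ crossing the real hyperplane $\mathrm{Re}\,\alpha=0$ positively, which gives a positive generator. I would also check that the total count of crossings in one arc is $|\Phi|/m=\ell(w_m)$, consistent with Theorem \ref{t:Springer}(2). As a consistency check, when $d=m$ one recovers $\Pi_W$ by Theorem \ref{t:Springer}(3). Any remaining ambiguity in the choice of Springer element, or of starting ray, is absorbed by Theorem \ref{t:Springer}(5), giving equality up to cyclic shift.
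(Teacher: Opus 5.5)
Your proposal is correct. It starts from the same picture as the paper's outline: $\beta_A$ is the braid traced by $A$ on a small circle in $\ft^{\mathrm{reg}}/W$, and in the isoclinic case it is governed by the regular leading term $A_d$. The two arguments diverge at the identification step. The paper treats $d=1$ algebraically. The $m$-th power of the loop of $xt^{-1/m}$ is the loop of $xt^{-1}$, i.e.\ $\Pi_W$, so $\beta_A$ is an $m$-th root of the full twist, and Theorem~\ref{t:Springer}(5) determines it up to cyclic shift; general $d$ is declared ``similar''. You instead cut the circle into $d$ arcs, each a $W$-translate of the Brou\'e--Michel rotation path representing $\widetilde{w_m}$. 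You then check, via wall-crossings of the real part, that each arc contributes a positive reduced word for $w_m$: all crossings have the same sign, there are $|\Phi|/m=\ell(w_m)$ of them, and the arc ends in the $w_m$-translate of the starting chamber. This handles every $d$ at once. It also gives equality of positive words up to the choice of starting ray, not just conjugacy in $B_W$. Finally, it supplies a point the paper's outline uses tacitly: the braid is the lift of a single element of $W$, which is what makes Theorem~\ref{t:Springer}(5) applicable. The paper's route is shorter and needs only the characterisation of $\widetilde{w_m}$ as an $m$-th root of $\Pi_W$. One detail to fix: as $\theta$ increases, $e^{-id\theta/m}A_d$ rotates clockwise. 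So your arc is the inverse of the loop $e^{i\theta}x$ in $\pi_1(\ft^{\mathrm{reg}}/W)$, and read as a chamber-crossing word it gives $\widetilde{w_m^{-1}}$ rather than $\widetilde{w_m}$. This is harmless. Since $W$ acts by real matrices, $w_m^{-1}\bar x=e^{2\pi i/m}\bar x$ and $\mathrm{Re}\,\alpha(\bar x)=\mathrm{Re}\,\alpha(x)$, so $w_m^{-1}$ is itself an $m$-Springer element. Theorem~\ref{t:Springer}(5) then relates it to $w_m$ by cyclic shifts, and these pass to $d$-th powers. Alternatively, simply orient the circle to match the convention of \cite{BBAMY}.
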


If $m$ is elliptic, this result appears without proof in \cite[\S 4.4.1]{BBAMY}. The general case  is discussed in detail in \cite{braids}. For the sake of completeness, we give an outline of the proof. 

\begin{proof}[Outline of the proof]
 Let $A\in \ft[t^{-1/m}]$ be a toral irregular class. We first recall the definition of the conjugacy class in the braid group $B_W$ associated to $A$. Observe that we can think of $A$ as a map $\bC^\times \ra \ft^{\mathrm{reg}}/W$. Restricting this to a circle of radius $\epsilon$, we obtain an element of $\pi_1(\ft^{\mathrm{reg}}/W) = B_W$. Thus, $A$ gives rise to a conjugacy class $[\beta]\subset B_W$. One can show that this  class is independent of the circle (provided $\epsilon$ is small enough) and the base point. By construction, the positive braid $\beta_A$ belongs to $[\beta]$.
 
Now suppose $d=1$ and $A(t^{1/m})=xt^{-1/m}$ with $x\in \ft^{\mathrm{reg}}$. It is easy to see that the braid associated to the irregular class $xt^{-1}$ is the full twist $\Pi_W$. Thus, $\beta_A$ must be an $m^{\text{th}}$ root of $\Pi_W$. By Theorem \ref{t:Springer}.(5), up to a cyclic shift, we have $\beta_A=\widetilde{w_m}$. The proof for general $d$ is similar. 
\end{proof}

\subsection{Proof of Theorem \ref{t:main}} 
We now give the proof of our main theorem. The case $\nu>1$ follows from Lemma \ref{l:Full}. So, suppose $\nu<1$. Let $w$ be an $m$-Springer element. 
We follow Lusztig's strategy (discussed in \S \ref{ss:Nice}) and count points on $\cM((\widetilde{w})^d, \cC)$ over finite fields.   
We have the following generalisation of \eqref{eq:Lusztig}: 
\begin{equation} \label{eq:Trinh}
|\cM((\widetilde{w})^d, \cC)^F|
= \frac{1}{|G^F|} \sum_{g\in \cC^F} \mathrm{tr}((g, (T_w)^d), \cF) 
= \frac{1}{|G^F|} \sum_{g\in \cC^F}  \sum_{E\in \mathrm{Irr}(W)}\,  \mathrm{tr}((T_w)^d, E_q)\, \mathrm{tr}(g, \rho_E).
\end{equation} 
See, for instance,  \cite[Lemma 8.5.7]{Trinh} where a closely related formula appears. (Note, however, Trinh considers a slightly different action of $H_q$ on $\cF$.)

 We prove the theorem by explicitly computing the right hand side of \eqref{eq:Trinh} for every Springer element $w$ and every unipotent class $\cC$. The naive approach is to expand $(T_w)^d$ in terms of the standard basis of $H_q$ and compute the above value using the character table of $H_q$ and $G(\Fq)$. Indeed, one can verify the theorem in many cases by following this approach. However, for certain slopes in types $E_6$, $E_7$ and $E_8$, this approach becomes computationally unfeasible. 

We can dramatically simplify the computation using the following fact: 
\begin{lem} 
Let $\mathbf{c}(E_q):=|\Phi| - \mathbf{a}(E_q) - \mathbf{A}(E_q)$ be the content of $E_q$, where $\mathbf{a}$ and $\mathbf{A}$ are Lusztig's $\mathbf{a}$- and $\mathbf{A}$-function, respectively. Let $w$ be a Springer element. Then, we have: 
\[
\mathrm{tr}((T_w)^d, E_q) = q^{\nu\, \mathbf{c}(E_q)}\, \mathrm{tr}(w^d, E).
\] 
\end{lem}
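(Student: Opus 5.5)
The plan is to reduce to the central element $T_{\Pi_W}=(T_{w_0})^2$ and use the fact (Theorem~\ref{t:Springer}.(3)) that $\widetilde{w}^m=\Pi_W$. The known input is the classical formula for the action of the full twist on an irreducible representation of $H_q$. Since $T_{\Pi}:=T_{\widetilde{w}}^m$ is central in $H_q$ (it is the image of the central element $\Pi_W\in B_W$), it acts on $E_q$ by a scalar. Taking the generic Hecke algebra with parameter $v=q^{1/2}$, this scalar is
\[
T_\Pi|_{E_q} = q^{\,|\Phi|-\mathbf{a}(E_q)-\mathbf{A}(E_q)} = q^{\mathbf{c}(E_q)}.
\]
This is a standard result (Springer; Broué--Michel, \cite{BM}). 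One can derive it from the relation between $\mathbf{a},\mathbf{A}$ and the generic degree: the central character of $T_{w_0}^2$ is $q^{N}$ times $q^{-(\mathbf{a}+\mathbf{A})(E)}$ times a root of unity, which is $1$ for the square. I would cite it rather than reprove it.

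Next, with $\zeta=e^{2\pi i/m}$, I want to relate the eigenvalues of $T_w$ on $E_q$ to those of $w$ on $E$. Consider the Hecke algebra over $\bC[v^{\pm1}]$ and the element $T_w$ acting on the specialisation $E_v$. Its characteristic polynomial has coefficients in $\bC[v^{\pm1}]$, and at $v=1$ it becomes that of $w$ on $E$. Since $T_w^m = v^{2\mathbf{c}(E)}\cdot\mathrm{id}$, every eigenvalue $\lambda(v)$ of $T_w$ satisfies $\lambda^m=v^{2\mathbf{c}}$. Hence $\lambda(v)=\xi\, v^{2\mathbf{c}/m}$ with $\xi$ an $m$-th root of unity, and by continuity in $v$ each $\xi$ is constant along the specialisation. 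This requires passing to an extension like $\bC[v^{\pm1/m}]$, and using that a polynomial whose roots lie in the finite set $\{\xi v^{2\mathbf c/m}\}$ varies continuously.

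I also need $T_w$ to be semisimple on $E_v$. This follows because $T_w^m$ is a scalar, so its minimal polynomial divides $X^m-v^{2\mathbf c}$, which has distinct roots for generic $v$. Then at $v=1$ the multiset of $\xi$'s equals the eigenvalue multiset of $w$ on $E$. Therefore
\[
\mathrm{tr}(T_w^d,E_q)=\sum \xi^d\, q^{d\mathbf c/m}=q^{\nu\mathbf{c}(E_q)}\,\mathrm{tr}(w^d,E).
\]

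The main obstacle is the continuity/specialisation step: ensuring the roots of unity attached to eigenvalues are locally constant in $v$, and that specialising $v=q^{1/2}$ and $v=1$ through the Tits deformation (which identifies $E_q$ with $E$) is legitimate. To handle it, I would work over the ring $\bC[v^{\pm 1/m}]$ and note that $\det(X-T_w)=\prod(X-\xi_i v^{2\mathbf c/m})$ with $\xi_i$ constants, by unique factorisation. A minor point is the sign or branch choice of $q^{\nu\mathbf c}$, which is fixed by choosing $q^{1/2m}$ positive.
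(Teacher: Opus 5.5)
Your argument is correct and is essentially the standard proof behind the result that the paper simply cites (\cite[Corollary 4.20]{BM}). The central element $T_{w_0}^2=(T_w)^m$ acts on $E_q$ by $q^{\mathbf{c}(E_q)}$, so every eigenvalue of $T_w$ has the form $\xi q^{\mathbf{c}/m}$ with $\xi^m=1$, and specialising the factorised characteristic polynomial over $\bC[v^{\pm 1/m}]$ at $v=1$ identifies the $\xi$'s with the eigenvalues of $w$ on $E$. Two minor slips: in your parenthetical heuristic the factor should be $q^{|\Phi|}=q^{2N}$ rather than $q^N$ (for the trivial representation, $T_{w_0}^2$ acts by $q^{2N}$), and the semisimplicity step is unnecessary, since traces depend only on eigenvalues counted with multiplicity.
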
 

\begin{proof} 
See, for example, \cite[Corollary 4.20]{BM} and \cite[Corollary 9.2.2]{Trinh}. 
\end{proof} 

The above discussion implies  
\begin{equation} 
|\cM((\widetilde{w})^d, \cC)^F|
= \frac{1}{|G^F|} \sum_{g\in \cC^F}  \sum_{E\in \mathrm{Irr}(W)}\,  q^{\nu\, \mathbf{c}(E_q)}\, \mathrm{tr}(w^d, E)\, \mathrm{tr}(g, \rho_E). 
\end{equation} 
Observe that the summands involve only character values of $W$ and $G^F$. These quantities can be quickly computed using CHEVIE \cite{Michel}, allowing us to prove the theorem. 
We refer the reader to \cite{Whitbread} for the code computing the above sum and \S\ref{ss:examplecode} for examples.  \qed

\subsection{Rigid isoclinic connections}\label{ss:rigid}
In \cite[\S 7.9]{JY}, a table listing local data of \emph{potentially} rigid elliptic isoclinic connections is presented. However, the table was left incomplete, since the authors were only able to determine the existence of such connections in a limited number of cases. Using our main theorem, we can complete this table; see \S\ref{t:rigid}. Moreover, we can prove: 

\begin{prop} 
Let $G$ be an almost simple group of exceptional type. Then the cohomologically rigid elliptic isoclinic $G$-connections (whose local data is given in \S\ref{ss:rigidTable}) are physically rigid. 
\end{prop}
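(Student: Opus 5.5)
We will show that for each entry $(\nu=d/m,\cC)$ of the table in \S\ref{ss:rigidTable}, the moduli stack $\Conn(A,\cC)$ consists of a single isomorphism class, with $A$ any isoclinic irregular class of slope $\nu$. By the Riemann--Hilbert equivalence and the Proposition identifying $\beta_A$ with $(\widetilde{w_m})^d$ up to cyclic shift, it suffices to prove that the groupoid $\cM((\widetilde{w_m})^d,\cC)(\bC)$ has exactly one isomorphism class of objects. A cyclic shift does not change the stack up to isomorphism, so we may work with the Springer element directly. Physical rigidity means exactly that there is a unique connection with the given local data up to isomorphism. So the goal is: the braid variety (the GIT/coarse quotient) is a single point. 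Cohomological rigidity, taken from \cite{JY}, already tells us the expected dimension is $0$.

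First, we use the point count \eqref{eq:Trinh}, in the simplified form given by the content Lemma, to compute $|\cM((\widetilde{w})^d,\cC)^F|$ as a function of $q$ for each table entry. We run the CHEVIE code over good characteristic. We expect the answer to be a constant: namely $1/|Z(G)^F|$, or more generally $1/|A|$ where $A$ is the finite stabiliser group of a point. This is the count expected of a stack $[\mathrm{pt}/\Gamma]$ with $\Gamma$ finite, reflecting the centre acting trivially.

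Second, we upgrade this count to a geometric statement. The stack is a quotient $X/G$, where
\[
X=\{(g,F_1,\dots,F_{d+1})\}
\]
is the space of framed tuples. Because the irregular class is isoclinic and toral, we expect stabilisers of points of $X$ to be finite, namely contained in $Z(G)$. By cohomological rigidity, $\dim X=\dim G$. A Deligne--Lusztig/Katz-type argument then applies: if the $\Fq$-point count of $X/G$ is polynomial and equals that of $B Z(G)$ for all $q$, then the number of top-dimensional components and the number of orbits are determined. Concretely, $|X^F|=|G^F|\cdot|\cM^F|$. The leading coefficient of this polynomial counts the Frobenius-stable irreducible components of $X$ of dimension $\dim G$ (Lang--Weil). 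Each $G$-orbit is open of dimension $\dim G$, so $X$ is a finite union of orbits. The count then forces a single orbit, after checking that each orbit contributes $|G^F|/|\mathrm{Stab}^F|$. Spreading out from $\bC$ to $\Fq$ and back is standard for these finite-type schemes defined over $\mathbb{Z}[1/N]$.

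The main obstacle is the second step: justifying that all stabilisers are finite and that $X$ is equidimensional of dimension $\dim G$, so that the count genuinely detects a single orbit rather than, for example, cancellation between orbits of different dimensions. For finiteness of stabilisers, we first try to argue that an automorphism of an isoclinic connection preserves the formal type at $0$. It must therefore lie in a twisted form of the torus fixed by $w_m$, and for elliptic $w_m$ this torus $T^{w_m}$ is finite. For equidimensionality, we try to use that braid varieties of positive braids are smooth of pure dimension; for Trinh's variants with prescribed monodromy we fall back on the dimension estimates underlying cohomological rigidity in \cite{JY}. If these fail in some case, we instead compute the count for each finer stratum by orbit type, which CHEVIE also permits.
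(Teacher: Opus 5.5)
Your outline matches the paper's: compute $|\cM((\widetilde{w})^d,\cC)^F|$ from \eqref{eq:Trinh} and the content lemma, get finite stabilisers from ellipticity, then pass from the count to a single $\bC$-point. Your $T^{w}$ argument for finite stabilisers is a reasonable substitute for the paper's ``elliptic isoclinic connections are irreducible''.

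The genuine problem is the step that turns the count into a number of orbits. The $\Fq$-count of a point with finite automorphism group $\Gamma$ is not $1/|\Gamma|$. The $\Fq$-points of $B\Gamma$ are the $\Gamma$-torsors, indexed by $H^1(F,\Gamma)$. The mass formula gives $\sum_{[\gamma]}1/|C_{\Gamma,F}(\gamma)|=1$, with $C_{\Gamma,F}(\gamma)$ the $F$-twisted centraliser. Equivalently, by Lang's theorem an $F$-stable $G$-orbit $O\cong G/\Gamma$ with $\Gamma$ finite has $|O^F|=|G^F|$, not $|G^F|/|\Gamma^F|$. This is because $O^F$ splits into several $G^F$-orbits, one for each class in $H^1(F,\Gamma)$.

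So the value to expect is $1$, not $1/|Z(G)^F|$. Indeed $X$ and $|G^F|$ do not depend on the isogeny type of $G$, so the count cannot depend on the centre, and the paper's computation returns exactly $1$. With your contribution formula, a count of $1$ together with non-trivial finite stabilisers would be read as several orbits. Your own $T^{w}$ argument allows such stabilisers, and they need not be central. So ``the count then forces a single orbit'' does not follow from what you wrote. The correct input is exactly the fact the paper invokes: for a Deligne--Mumford stack, the point count of the stack equals that of its coarse moduli space.

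With that corrected, your ``main obstacle'' disappears. The coarse space $\mathtt{M}(\beta_A,\cC)$ has exactly one $\mathbb{F}_{q^n}$-point for every $n$. The paper then applies Katz's theorem: a polynomial count equals the E-polynomial, so $E(\mathtt{M};x,y)=1$, and $\mathtt{M}$ is zero-dimensional with a single $\bC$-point. You need no a priori knowledge of $\dim X$, no equidimensionality, and no input from cohomological rigidity, because the constant count itself forces zero-dimensionality.

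Your Lang--Weil variant also works once the correct value is used:
\begin{enumerate}
\item $|X^{F^n}|=|G^{F^n}|$ for all $n$ gives $\dim X=\dim G$ and a unique top-dimensional geometric component.
\item Finite stabilisers make every orbit closed of dimension $\dim G$, hence a top-dimensional component.
\item So there is exactly one orbit over $\overline{\mathbb{F}}_q$.
\end{enumerate}
You would still have to transport this, and the finiteness of stabilisers, between $\bC$ and almost all characteristics by a constructibility argument. That is the step Katz's theorem packages for you in the paper's proof.
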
 

\begin{proof} 
Let $\cC$ be a unipotent class such that the stack $\Conn(A, \cC)$ contains an elliptic cohomologically rigid connection. Let $\mathtt{Conn}(A, \cC)$ be its coarse moduli space. We would like to show that this variety has a unique $\bC$-point. Equivalently, we want to prove that the coarse moduli space  $\mathtt{M}(\beta_A, \cC)$ on the Betti side has a unique $\bC$-point. 
By \cite{KatzAppendix}, it suffices to show that $\mathtt{M}(\beta_A, \cC)$ has a unique $\mathbb{F}_q$-point. 
Now, it is known that (under the assumption that $w$ is elliptic) the action of $G$ on 
\[
\bigg\{
(g,F_1,\dots, F_{d+1})\in \cC\times\fB^{d+1}
\, \bigg|\, 
\substack{
\displaystyle gF_1=F_{d+1}\\[0.1cm]
\displaystyle w(F_i, F_{i+1})=w,\, i=1,\ldots,d  
}
\bigg\}
\]
has finite stabilisers. Thus, $\cM(\beta_A, \cC)$ is a Deligne--Mumford stack.\footnote{Here is an argument in the de Rham setting. It is known that an elliptic isoclinic connection is irreducible, cf.\ \cite[Lemma 17]{Yi}.  It follows that every connection in $\Conn(A, \cC)$  is irreducible. In particular, these connections have finite stabilisers. Therefore, $\Conn(A, \cC)$ is a Deligne--Mumford stack.}  As noted in \cite[\S 2]{stacks}, the number of points of a Deligne--Mumford stack over  a finite field is the same as the number of points of its coarse moduli space. Finally, our computer calculations show that in all cohomologically rigid elliptic cases, we have
$|\cM(\beta_A, \cC)^F|=1$.
\end{proof} 

We expect this result to hold for classical groups as well; see  \cite[Table 11]{JY} for the local data of the corresponding connections.

\subsection{Explicit realisation} \label{ss:Explicit}
Let $G$ be an almost simple algebraic group with Coxeter number $h$. Let $d<h$ be a positive integer coprime to $h$ and let $\nu=d/h$.  Choose a generator $x_\alpha$ for each root space $\fg_\alpha$. 
Let 
\[
N_d :=\sum_{\mathrm{ht}(\alpha)=-d}  x_\alpha, \qquad  E_d:= \sum_{\mathrm{ht}(\alpha)=h-d} x_\alpha,\qquad \text{and}\qquad 
\nabla_\nu:= d+ (N_d+t^{-1} E_d)\frac{dt}{t}. 
\]
Following \cite{KS-rigid}, we call $\nabla_\nu$ a Coxeter $G$-connection. Note that $\nabla_\nu$ is regular singular at $\infty$ with monodromy $\exp(N_d)$. As shown in \emph{op. cit}, it is irregular (and isoclinic) at $0$ with slope $d/h$.

\begin{prop} The connection $\nabla_\nu$ has minimal (local) monodromy among all Coxeter $G$-connections of slope $d/h$ and unipotent monodromy, i.e. $\exp(N_d)=\cC_{d/h}$. 
\end{prop}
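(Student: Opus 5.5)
By Theorem~\ref{t:main}, the connection $\nabla_\nu$ is an isoclinic $G$-connection of slope $d/h$ with unipotent monodromy the class of $\exp(N_d)$. Hence $\cC_{\exp(N_d)}\ge \cC_{d/h}$, and only the reverse inequality needs proof. My plan is to identify the class of $\exp(N_d)$ explicitly and compare it with the classes $\cC_{d/h}$, which are computed in the proof of Theorem~\ref{t:main}.

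First I will determine the nilpotent orbit of $N_d=\sum_{\mathrm{ht}(\alpha)=-d}x_\alpha$. Since $d$ is coprime to $h$, the element $N_d$ is a generic element of the $\theta$-graded piece $\fg_{-d}$ for the principal grading. More precisely, consider the $\bC^\times$-action by $\rho^\vee$ together with $\mathrm{Ad}(T)$. The $T$-orbit of $N_d$ is dense in $\fg_{-d}$, because all root-vector coefficients are nonzero and the simple coroots act independently. Therefore the orbit $G\cdot N_d$ is the nilpotent orbit $\mathcal{O}_d$ which is dense in $G\cdot\fg_{-d}$, that is, the Richardson-type orbit of the graded piece.

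Next I will use the known description of these orbits. For $d\ge 1$ with $\gcd(d,h)=1$, the orbit of a generic element of $\fg_{-d}$ (equivalently of $\fg_{d}$) is the orbit attached by Lusztig's map, or by the Kazhdan--Lusztig/Spaltenstein correspondence, to the element $c^{d}$, where $c$ is a Coxeter element. This follows because $\bigoplus_{j\ge d}\fg_j=\mathrm{Lie}$ of $U\cap {}^{\dot c^{\,d}}U^-$-type pieces. For classical groups this is established in \cite{JY} and \cite{SageReview}. For exceptional groups, I will simply compute the orbit of $N_d$ directly, for instance from its weighted Dynkin diagram or the rank of $\mathrm{ad}(N_d)$ and its powers in GAP. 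I will then check that the unipotent class $\exp(\mathcal{O}_d)$ coincides with the class $\cC_{d/h}$ listed in \S\ref{ss:minimal}.

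The main obstacle is this comparison step. There is no conceptual reason, from the point-count proof, that $\cC_{d/h}$ equals the generic orbit of $\fg_{-d}$. The proof therefore reduces to a finite case check over the pairs $(G,d)$ with $d<h$ coprime to $h$. As a conceptual cross-check, I would also use the dimension bound: $\dim \mathcal{O}_d\le \dim G\cdot\fg_{-d}$, and cohomological rigidity forces $\dim \cC_{d/h}$ to satisfy $\dim \cC = |\Phi|(1-\nu)$-type equalities in the rigid cases. Where these dimension equalities hold together with closure relations, they pin down the orbit without enumeration.
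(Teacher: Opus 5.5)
Your treatment of exceptional types matches the paper's proof: compute the orbit of $N_d$ (the paper uses de Graaf's GAP package \cite{dG}) and compare it with the table in \S\ref{ss:minimal}. Your opening observation, that Theorem~\ref{t:main} already gives $[\exp(N_d)]\geq\cC_{d/h}$, is also correct.

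\textbf{The gap is in classical types.} There are infinitely many pairs $(G,d)$, so no finite check is available, and your argument instead relies on the claim that the generic orbit in $\fg_{-d}$ is ``the orbit attached by Lusztig's map to $c^d$''. As stated, this claim is false.
\begin{itemize}
\item Since $\gcd(d,h)=1$, $c^d$ is again a regular element of order $h$, so it lies in the class $W(h)$ of Coxeter elements.
\item Lusztig's map depends only on the conjugacy class and is computed at a minimal-length representative. Applied to this class it returns the regular unipotent class, not $\cC_{d/h}$.
\item For $1<d<h/2$, $c^d$ is not of minimal length, because $\ell(c^d)=d\,r>r$ by Theorem~\ref{t:Springer}(2). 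This is exactly why the paper stresses that Theorem~\ref{t:main} is not a consequence of Lusztig's result.
\end{itemize}
Suppose you meant instead ``the minimal unipotent class meeting $Bc^dB$'', which coincides with $(BcB)^d$ only for $d<h/2$. Then the claim is essentially the proposition itself. Your justification also fails: $\bigoplus_{j\ge d}\fg_j$ is not the Lie algebra of $U\cap{}^{\dot c^{\,d}}U^-$. Already for $d=1$ the dimensions, $|\Phi^+|$ and $\ell(c)=r$, differ once the rank is at least $2$. Finally, \cite{SageReview} does not establish the statement. It appears there as Conjecture~5.7(2), which the present proposition settles.

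\textbf{What the classical case needs.} You need explicit descriptions on both sides, compared directly, as the paper does:
\begin{itemize}
\item the Jordan type of $N_d$ in the standard representation, computed in \cite[\S 4.3]{KS-rigid};
\item the explicit description of $\cC_{d/h}$ in \cite[Table 1]{JY}.
\end{itemize}
For example, in $\mathrm{SL}_n$, $N_d$ is the $d$-step shift $e_i\mapsto e_{i+d}$. Writing $n=ad+b$ with $0\le b<d$, it has $b$ Jordan blocks of size $a+1$ and $d-b$ blocks of size $a$.

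Your first step can make this comparison cheaper, in every type. Since $[\exp(N_d)]\geq\cC_{d/h}$ is already known, equality follows once $\dim G\cdot N_d=\dim\cC_{d/h}$. That is, it suffices that $\dim\ker\operatorname{ad}(N_d)$ equals the centraliser dimension of an element of $\cC_{d/h}$.

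A smaller point: density of $T\cdot N_d$ in $\fg_{-d}$ would require the roots of height $d$ to be linearly independent characters of $T$. Independence of the simple coroots is not the relevant condition. In any case, nothing in the argument needs density once $N_d$ is computed directly.
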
 

\begin{proof}
 When $G$ is of classical type, the partition corresponding to $N_d$ was computed in \cite[\S 4.3]{KS-rigid}. One readily verifies that the result agrees with the explicit description of $\cC_{d/h}$ given in Table 1 of \cite{JY}. 
When $G$ is of exceptional type, we have checked that $\exp(N_d)$ agrees with the class $\cC_{d/h}$ given in \S \ref{ss:minimal}, using a GAP package developed by de Graaf \cite{dG}. 
\end{proof}

We refer the reader to \cite{KS-galois} for a description of the differential Galois group of $\nabla_\nu$.

\fontsize{10}{12}\selectfont

\newpage

\appendix 
\section{{}}
\subsection{Minimal unipotent classes for isoclinic connections} The non-elliptic slopes are coloured blue.
\label{ss:minimal}
\begin{table}[H]
\begin{tabular}{|c|c|}
\multicolumn{1}{c}{$F_4$} & \multicolumn{1}{c}{} \\ 
\hline
$\nu$ & $\cC_\nu$ \\
\hline
$1/12$  & $F_4$                 \\
$5/12$  & $A_2+\widetilde{A_1}$ \\
$7/12$  & $A_1+\widetilde{A_1}$ \\
$11/12$ & $A_1$                 \\
$1/8$ & $F_4(a_1)$            \\
$3/8$ & $A_2+\widetilde{A_1}$ \\
$5/8$ & $\widetilde{A_1}$     \\
$7/8$ & $A_1$                 \\
$1/6$ & $F_4(a_2)$ \\
$5/6$ & $A_1$      \\
$1/4$ & $F_4(a_3)$ \\
$3/4$ & $A_1$      \\
$1/3$ & $\widetilde{A_2}+A_1$ \\
$2/3$ & $\widetilde{A_1}$     \\
$1/2$ & $A_1+\widetilde{A_1}$ \\
\hline
\multicolumn{1}{c}{} & \multicolumn{1}{c}{} \\ 
\multicolumn{1}{c}{$E_6$} & \multicolumn{1}{c}{} \\ 
\hline
$\nu$ & $\cC_\nu$ \\
\hline
$1/12$  & $E_6$     \\
$5/12$  & $A_2+2A_1$ \\
$7/12$  & $3A_1$     \\
$11/12$ & $A_1$      \\
$1/9$ & $E_6(a_1)$ \\
$2/9$ & $A_4+A_1$  \\
$4/9$ & $A_2+A_1$  \\
$5/9$ & $3A_1$     \\
$7/9$ & $A_1$      \\
$8/9$ & $A_1$      \\
\color{blue}$1/8$ & $D_5$  \\
\color{blue}$3/8$ & $A_2+2A_1$ \\
\color{blue}$5/8$ & $2A_1$ \\
\color{blue}$7/8$ & $A_1$ \\
$1/6$ & $E_6(a_3)$ \\
$5/6$ & $A_1$ \\
\color{blue}$1/4$ & $D_4(a_1)$  \\
\color{blue}$3/4$ & $A_1$ \\
$1/3$ & $2A_2+A_1$ \\
$2/3$ & $2A_1$     \\
\color{blue}$1/2$ & $3A_1$ \\
\hline
\end{tabular}
\quad\quad 
\begin{tabular}{|c|c|}
\multicolumn{1}{c}{$G_2$} & \multicolumn{1}{c}{} \\ 
\hline
$\nu$ & $\cC_\nu$ \\
\hline
$1/6$ & $G_2$ \\
$5/6$ & $A_1$ \\
$1/3$ & $G_2(a_1)$ \\
$2/3$ & $A_1$ \\
$1/2$ & $\widetilde{A_1}$ \\
\hline 
\multicolumn{1}{c}{} & \multicolumn{1}{c}{} \\ 
\multicolumn{1}{c}{$E_7$} & \multicolumn{1}{c}{} \\ 
\hline
$\nu$ & $\cC_\nu$ \\
\hline
$1/18$  & $E_7$        \\
$5/18$  & $A_3+A_2+A_1$ \\
$7/18$  & $A_2+3A_1$    \\
$11/18$ & $3A_1'$       \\
$13/18$ & $2A_1$        \\
$17/18$ & $A_1$         \\
$1/14$  & $E_7(a_1)$    \\
$3/14$  & $A_4+A_2$     \\
$5/14$  & $2A_2+A_1$    \\
$9/14$  & $2A_1$        \\
$11/14$ & $A_1$         \\
$13/14$ & $A_1$         \\
\color{blue}$1/9$ & $E_6(a_1)$ \\
\color{blue}$2/9$ & $A_4+A_1$ \\
\color{blue}$4/9$ & $A_2+A_1$ \\
\color{blue}$5/9$ & $(3A_1)'$ \\
\color{blue}$7/9$ & $A_1$ \\
\color{blue}$8/9$ & $A_1$ \\
\color{blue}$1/7$ & $A_6$ \\
\color{blue}$2/7$ & $A_3+A_2$ \\
\color{blue}$3/7$ & $A_2+2A_1$ \\
\color{blue}$4/7$ & $(3A_1)'$ \\
\color{blue}$5/7$ & $2A_1$ \\
\color{blue}$6/7$ & $A_1$ \\
$1/6$ & $E_7(a_5)$ \\
$5/6$ & $A_1$      \\
\color{blue}$1/3$ & $2A_2+A_1$ \\
\color{blue}$2/3$ & $2A_1$      \\
$1/2$ & $4A_1$ \\
\hline
\end{tabular}
\quad \quad 
\begin{tabular}{|c|c|}
\multicolumn{1}{c}{$E_8$} & \multicolumn{1}{c}{} \\ 
\hline
$\nu$ & $\cC_\nu$ \\
\hline
$1/30$  & $E_8$            \\
$7/30$  & $A_4+A_2+A_1$     \\
$11/30$ & $2A_2+2A_1$       \\
$13/30$ & $A_2+3A_1$        \\
$17/30$ & $4A_1$            \\
$19/30$ & $3A_1$            \\
$23/30$ & $2A_1$            \\
$29/30$ & $A_1$             \\
$1/24$  & $E_8(a_1)$        \\
$5/24$  & $A_4+A_3$         \\
$7/24$  & $A_3+A_2+A_1$     \\
$11/24$ & $A_2+2A_1$        \\
$13/24$ & $4A_1$            \\
$17/24$ & $2A_1$            \\
$19/24$ & $A_1$             \\
$23/24$ & $A_1$             \\
$1/20$  & $E_8(a_2)$        \\
$3/20$  & $A_6+A_1$         \\
$7/20$  & $2A_2+2A_1$       \\
$9/20$  & $A_2+2A_1$        \\
$11/20$ & $4A_1$            \\
$13/20$ & $2A_1$            \\
$17/20$ & $A_1$             \\
$19/20$ & $A_1$             \\
$1/15$  & $E_8(a_4)$        \\
$2/15$  & $D_7(a_2)$        \\
$4/15$  & $D_4(a_1)+A_2$    \\
$7/15$  & $A_2+A_1$         \\
$8/15$  & $4A_1$            \\
$11/15$ & $2A_1$            \\
$13/15$ & $A_1$             \\
$14/15$ & $A_1$             \\
\hline
\end{tabular}
\, 
\begin{tabular}{|c|c|}
\multicolumn{1}{c}{$E_8$} & \multicolumn{1}{c}{} \\ 
\hline
$\nu$ & $\cC_\nu$ \\
\hline
$1/12$  & $E_8(a_5)$        \\
$5/12$  & $A_2+3A_1$        \\
$7/12$  & $3A_1$            \\
$11/12$ & $A_1$             \\
$1/10$ & $E_8(a_6)$        \\
$3/10$ & $D_4(a_1)+A_1$    \\
$7/10$ & $2A_1$            \\
$9/10$ & $A_1$             \\
$1/8$ & $A_7$              \\
$3/8$ & $2A_2+A_1$         \\
$5/8$ & $3A_1$             \\
$7/8$ & $A_1$              \\
$1/6$ & $E_8(a_7)$          \\
$5/6$ & $A_1$               \\
$1/5$ & $A_4+A_3$           \\
$2/5$ & $A_2+3A_1$          \\
$3/5$ & $3A_1$              \\
$4/5$ & $A_1$               \\
$1/4$ & $2A_3$              \\
$3/4$ & $2A_1$              \\
$1/3$ & $2A_2+2A_1$          \\
$2/3$ & $2A_1$               \\
$1/2$ & $4A_1$               \\
\hline
\end{tabular}
\label{ta:unipotent}
\end{table}

\newpage

\subsection{Rigid elliptic isoclinic connections} \label{ss:rigidTable}
The entries undecided in \cite{JY} are coloured red.
\begin{table}[H]
\begin{center}
\begin{tabular}{|c|c|c|c|}
\hline
$G$ 	& $\nu$   	& $\cC$ & existence    		\\
\hline
$G_2$         		& $2/3$   	& $A_1$ 			& \color{red}yes                		\\
$F_4$         		& $3/4$   	& $A_1$ 			& \color{red}yes                		\\
${}$          		& $3/8$   	& $A_2+\tilde{A}_1$ 	& \color{red}yes		\\
${}$          		& $5/8$   	& $\tilde{A}_1$ 		& yes         	\\
$E_6$         		& $2/9$   	& $A_4+A_1$ 		& \color{red}yes            \\
${}$          		& $4/9$   	& $A_2+A_1$ 		& \color{red}yes           	\\
${}$          		& $7/9$   	& $A_1$ 			& \color{red}yes                	\\
${}$          		& $5/12$  	& $2A_2$ 			& no               	\\
$E_7$         		& $3/14$  	& $D_5(a_1)$ 		& \color{red}no           	\\
${}$          		& $3/14$  	& $A_4+A_2$ 		& \color{red}yes           	\\
${}$          		& $9/14$  	& $2A_1$ 			& \color{red}yes               \\
${}$          		& $11/14$ 	& $A_1$ 			& \color{red}yes               \\
${}$          		& $5/18$  	& $A_3+A_2$ 		& no           	\\
${}$          		& $7/18$  	& $2A_2$ 			& no            \\
${}$         		 & $7/18$  	& $A_2+3A_1$ 		& yes           	\\
\hline
\end{tabular}
\qquad
\begin{tabular}{|c|c|c|c|}
\hline
$G$ 	& $\nu$   	& $\cC$ & existence     		\\
\hline
$E_8$         		& $3/10$  	& $D_4(a_1)+A_1$ 	& \color{red}yes        	\\
${}$          		& $5/12$  	& $A_3$ 			& \color{red}no                		\\
${}$          		& $2/15$  	& $D_6$ 			& \color{red}no                		\\
${}$          		& $2/15$  	& $E_6$ 			& \color{red}no                		\\
${}$          		& $2/15$  	& $D_7(a_2)$ 		& \color{red}yes           	\\
${}$          		& $4/15$  	& $D_4+A_1$ 		& \color{red}no            	\\
${}$          		& $4/15$  	& $D_4(a_1)+A_2$ 	& \color{red}yes       	\\
${}$          		& $7/15$  	& $A_2+A_1$ 		& \color{red}yes            	\\
${}$          		& $3/20$  	& $A_6+A_1$ 		& \color{red}yes            	\\
${}$          		& $3/20$  	& $E_7(a_4)$ 		& \color{red}no           	\\
${}$          		& $7/20$  	& $A_3+A_1$ 		& \color{red}no            	\\
${}$          		& $13/20$ 	& $2A_1$ 			& \color{red}yes               	\\
${}$          		& $5/24$  	& $D_4+A_2$ 		& \color{red}no            	\\
${}$          		& $5/24$  	& $E_6(a_3)$ 		& \color{red}no           	\\
${}$          		& $7/24$  	& $A_3+A_2$ 		& \color{red}no            	\\
${}$          		& $19/24$ 	& $A_1$ 			& \color{red}yes                		\\
${}$          		& $7/30$  	& $A_4+2A_1$ 		& no           	\\
${}$          		& $17/30$ 	& $3A_1$ 			& no               	\\
\hline
\end{tabular}
\end{center}
\label{t:rigid}
\end{table}

\subsection{Examples of the computer code} \label{ss:examplecode}
We explain how to use the code found in the repository \cite{Whitbread}.

\subsubsection{Counting points and determining $\cC_\nu$}
We will determine $\cC_\nu$ for the slope $\nu=2/3$ in type $G_2$. To do so, we count points on the braid stack associated to the braid $\beta = (\widetilde{w})^2$ where $w$ is an $3$-Springer element. We choose the $3$-Springer element $w=\mathfrak{c}^2 = s_1s_2s_1s_2$. Then we use the command
\begin{center}
\begin{verbatim}
count_points(coxgroup(:G,2),[1,2,1,2],2)
\end{verbatim}
\end{center}
which prints the table below.

\begin{table}[h]
\begin{tabular}{|l|r|}
\hline
$\cC \subseteq G$ & $|\mathcal{M}(\beta,\cC)^F|$ \\
\hline
$1$ & $0$ \\
$A_1$ & $1$ \\
$\widetilde{A_1}$ & $q^2$ \\
$G_2(a_1)$ & $q^4$ \\
$G_2$ & $q^6$ \\
\hline
\end{tabular}
\end{table}
Visually inspecting the table, we see $\cM(\beta,\cC)\neq \emptyset$ if and only if $\cC\geq A_1$. Therefore $\cC_\nu = A_1$. Moreover, we have $|\cM(\beta,\cC_\nu)^F|=1$; thus, we have a physically rigid isoclinic connection of slope $\nu$ and monodromy $\cC_\nu$.

Alternatively, we can use the following functions to determine $\cC_\nu$ and $|\cM(\beta,\cC_\nu)^F|$:

\begin{enumerate}[\itshape(i)]
\item To calculate $\cC_\nu$, we use the command
\begin{center}
\begin{verbatim}
interval_reps(coxgroup(:G,2),[1,2,1,2],2)
\end{verbatim}
\end{center}
which returns the tuple of unipotent classes
\begin{center}
\begin{verbatim}
(UnipotentClass(A1), UnipotentClass(G2))
\end{verbatim}
\end{center}
representing the fact that $\cM(\beta,\cC)\neq \emptyset$ if and only if $\cC\in[A_1,G_2]$.
\item To calculate $|\cM(\beta,\cC_\nu)^F|$, we use the command
\begin{center}
\begin{verbatim}
count_points_lower(coxgroup(:G,2),[1,2,1,2],2)
\end{verbatim}
\end{center}
which returns the point-count $|\cM(\beta,\cC_\nu)^F|=1$.
\end{enumerate}

\subsubsection{A positive braid which is not nice}
We saw in \S\ref{ss:Nice} an example of an element $w\in W$ for which the positive braid $\widetilde{w}$ is not nice. This example was $w = s_2s_3s_2s_4s_3s_2s_3$ in type $F_4$. To verify $\widetilde{w}$ is not nice, we count points on the braid stack $\cM(\widetilde{w},\cC)$ using the command
\begin{center}
\begin{verbatim}
interval_reps(coxgroup(:F,4),[2,3,2,4,3,2,3],1)
\end{verbatim}
\end{center}
which prints the table below.
\begin{table}[h]
\begin{tabular}{|l|r|}
\hline
$\cC \subseteq G$ & $|\mathcal{M}(\beta,\cC)^F|$ \\
\hline
$1$ 						& $0$\\
$A_1$ 						& $0$\\
$\widetilde{A_1}$ 			& $0$\\
$A_1+\widetilde{A_1}$ 	& $0$\\
$\widetilde{A_2}$ 			& $q^{-7}\Phi_1^{-1}$\\
$A_2$ 						& $0$\\
$A_2+\widetilde{A_1}$ 	& $0$\\
$\widetilde{A_2}+A_1$ 	& $q^{-7}\Phi_3$\\
$B_2$ 						& $q^{-4}\Phi_1^{-1}$\\
$C_3(a_1)$ 				& $(q^2+3q+1)q^{-5}$\\
$F_4(a_3)$ 				& $q^{-5}\Phi_1\Phi_2^2$\\
$C_3$ 						& $(q^3+q^2+q-2)q^{-2}\Phi_1^{-1}$\\
$B_3$ 						& $q^{-2}\Phi_2$\\
$F_4(a_2)$ 				& $(q^3+q^{-2}+q-2)q^{-2}$\\
$F_4(a_1)$ 				& $(q^3+q^2-1)q^{-1}$\\
$F_4$ 						& $q^3$\\
\hline
\end{tabular}
\end{table}

Therefore, both $\cM(\widetilde{w},B_2)$ and $\cM(\widetilde{w},\widetilde{A_2})$ are non-empty and $\cM(\widetilde{w},A_1+\widetilde{A_1})$ is empty. One readily verifies that $\widetilde{w}$ is not nice because there does not exist an interval which includes $B_2$ and $\widetilde{A_2}$ but excludes $A_1+\widetilde{A_1}$. 

\vfill

\end{document}